\newtheorem{claim}{Claim}[section]
\newtheorem{lemma}[claim]{Lemma}
\newtheorem{theorem}{Theorem}
\theoremstyle{definition}
\newtheorem{remark}[claim]{Remark}
\def\<{\langle}
\def\>{\rangle}
\def\projp{{\boldsymbol P}^{\perp}}
\def\bg{{\boldsymbol g}}
\def\bx{{\boldsymbol x}}
\def\bz{{\boldsymbol z}}
\def\bxz{{\boldsymbol x_0}}
\def\bM{{\boldsymbol M}}
\def\bX{{\boldsymbol X}}
\def\bA{{\boldsymbol A}}
\def\bB{{\boldsymbol B}}
\def\bW{{\boldsymbol W}}
\def\tz{\tilde{z}}
\def\bhu{{\hat{\boldsymbol u}}}
\def\tbz{\tilde{{\boldsymbol z}}}
\def\tbu{\tilde{{\boldsymbol u}}}
\def\bx{{\boldsymbol x}}
\def\bv{{\boldsymbol v}}
\def\bu{{\boldsymbol u}}
\def\by{{\boldsymbol y}}
\def\id{{\rm I}}
\def\sT{{\sf T}}
\def\bsigma{{\boldsymbol{\sigma}}}
\def\reals{{\mathbb{R}}}
\def\integers{{\mathbb{Z}}}
\def\normal{{\sf N}}
\def\PSD{{\sf PSD}}
\def\SDP{{\sf SDP}}
\def\OPT{{\sf OPT}}
\def\sS{{\sf S}}
\def\bDelta{{\boldsymbol \Delta}}
\def\bLambda{{\boldsymbol \Lambda}}
\def\bSigma{{\boldsymbol \Sigma}}
\def\rank{{\rm rank}}
\def\Tr{{\sf Tr}}
\title{A Grothendieck-type inequality for local maxima}
\author{
Andrea~Montanari\footnote{Department of Electrical
    Engineering and Department of Statistics, Stanford University}
}
\date{\today}                                           
\begin{document}

\maketitle

\begin{abstract}
A large number of problems in optimization, machine learning, signal processing can be effectively
addressed by suitable semidefinite programming (SDP) relaxations. Unfortunately, generic SDP solvers hardly scale 
beyond  instances with a few hundreds variables (in the underlying combinatorial problem). 
On the other hand, it has been observed empirically that
an effective strategy amounts to introducing a (non-convex) rank constraint, and solving the resulting smooth 
optimization problem by ascent methods. This  non-convex problem has --generically--
a large number of local maxima, and the reason for this success is therefore unclear.

This paper provides rigorous support for this approach. For the problem of maximizing a linear functional over
the elliptope,  we prove that \emph{all local maxima} are within 
a small gap from the SDP optimum. In several problems of interest, arbitrarily small relative error can be achieved by 
taking the rank constraint $k$ to be of order one, independently of the problem size.
\end{abstract}

\pagenumbering{arabic}

\section{Motivation and result}

Let $\PSD(n) \equiv\{\bX\in\reals^{n\times n}:\; \bX\succeq
0\}$  be the cone of $n\times n$ symmetric positive semidefinite
matrice. The convex set of positive-semidefinite matrices with diagonal
entries equal to one will be denoted by
\begin{align}
\PSD_1(n) \equiv\big\{\bX\in\reals^{n\times n}:\; \bX\succeq
0, \;X_{ii}=1\forall i\in [n]\big\}\, .
\end{align}
The set $\PSD_1(n)$  is also known as the \emph{elliptope}. Given a
symmetric matrix $\bA$, we define\footnote{Here and below
  $\<\bA,\bB\>=\Tr(\bA^{\sT}\bB)$ is the usual scalar product between matrices.}
\begin{align}
\SDP(\bA) \equiv \max\big\{ \<\bA,\bX\>\, :\;\;
\bX\in\PSD_1(n)\big\}\, .  \label{eq:SDP.DEF}
\end{align}
This semidefinite program (SDP) arises in a large number of applications
in particular as a relaxation of max-cut, and min-bisection of graphs.
Early references include \cite{goemans1995improved,nesterov1998semidefinite}.
Unfortunately, despite the many remarkable properties of this SDP, generic SDP solvers hardly scale beyond 
$n$ of the order of a few hundreds.

Recently, several authors \cite{burer2003nonlinear,OursReplicas} found that an effective strategy is to
constrain the rank of $\bX$ to satisfy 
$\rank(\bX)\le k$, explicitly solve the PSD constraint and use gradient ascent or coordinate ascent 
to maximize the resulting non-convex objective.

Explicitly, the rank-constrained problem and be written as a smooth optimization problem over the manifold
$\sS(n,k) \subset \reals^{n\times k}$, defined by
$\sS(n,k) \equiv\{\bsigma = (\bsigma_1,\bsigma_2,\dots,\bsigma_n)\;: \bsigma_i\in\reals^k, \|\bsigma_i\|_2=1\}$.
This can be identified with the product of $n$ $(k-1)$-dimensional spheres 
\begin{align}
\sS(n,k)\simeq \underbrace{\sS^{k-1}\times\sS^{k-1}\times\cdots \times
\sS^{k-1}}_{\mbox{$n$ times}}\, .
\end{align} 
The objective function $F_{\bA}:\sS(n,k)\to \reals$ is defined by:
\begin{align}
F_{\bA}(\bsigma) \equiv \sum_{i,j=1}^nA_{ij}\<\bsigma_i,\bsigma_j\>\, .
\end{align}
We then define 
\begin{align}
\OPT_k(\bA) \equiv \max\big\{ F_{\bA}(\bsigma)\; :\;\;\; \bsigma\in\sS(n,k)\, \big\}\, .
\label{eq:OptK}
\end{align}
Of course we have $\OPT_k(\bA)\le \SDP(\bA)$, with $\OPT_n(\bA)=\SDP(\bA)$.
The optimizer of the $k=n$ problem, $\bsigma_*$,  corresponds to an optimizer of the SDP through
$\bX_*=\bsigma_*\bsigma_*^{\sT}$. However, already much smaller values of $k$ give excellent (or exact) approximations
of the underlying SDP.
For instance, \cite{OursReplicas} used this approach to cluster graphs generated according to the 
sparse stochastic block model. Empirically, the resulting solutions are undistinguishable from the SDP optimum already
for $k=20$. The resulting algorithm can be used to cluster sparse graphs with up to $n=10^5$ vertices
in a matter of minutes\footnote{Code for the algorithm of  \cite{OursReplicas} is available at  
{\sf http://web.stanford.edu/~montanar/SDPgraph/}.}.

Unfortunately, the optimization problem (\ref{eq:OptK}) is non-convex and has --generically-- a large number of local 
maxima (but see Section \ref{sec:Related} for a discussion of related work). It is therefore unclear why  coordinate or gradient ascent should find a good approximation of $\OPT_k(\bA)$,
let alone a good approximation of $\SDP(\bA)$. Our main result shows that in fact all the maxima have value close
to the global optimum, and in fact close to $\OPT_k(\bA)$. (Here and below $\|\bM\|_2$ denotes the $\ell_2$
operator norm of matrix $\bM$.)
\begin{theorem}\label{thm:Main}
Let $\bsigma$ be a local maximum of $F_{\bA}$ over $\sS(n,k)$, $k\ge 2$. Then
\begin{align}
\SDP(\bA)\ge F_{\bA}(\bsigma)\ge \SDP(\bA) -\frac{8}{\sqrt{k}} \; n \|\bA\|_2\, .\label{eq:Main}
\end{align}
\end{theorem}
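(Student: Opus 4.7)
\emph{Optimality conditions.} At a local maximum on the product of spheres, stationarity yields Lagrange multipliers $\lambda_i=\sum_j A_{ij}\<\bsigma_i,\bsigma_j\>$ satisfying $\sum_j A_{ij}\bsigma_j=\lambda_i\bsigma_i$; in particular $\sum_i\lambda_i=F_{\bA}(\bsigma)$. Expanding $F_{\bA}$ along tangent perturbations $\bsigma_i\mapsto\cos(t_i)\bsigma_i+\sin(t_i)\bw_i$ with $\bw_i\perp\bsigma_i$, the first-order variation vanishes by stationarity and the second-order variation, non-positive at a local maximum, can be rewritten (after absorbing $\bv_i:=t_i\bw_i$) as
\begin{align*}
\sum_{i,j}A_{ij}\<\bv_i,\bv_j\>\;\le\;\sum_i\|\bv_i\|_2^2\,\lambda_i\qquad\text{for every }\bv_i\perp\bsigma_i.
\end{align*}

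\emph{Random tangent vectors from the SDP optimum.} I would test this inequality against a randomized rank-$k$ sketch of an SDP optimizer. Factor an optimizer as $\bX^*=(\bsigma^*)(\bsigma^*)^\sT$ with unit-norm rows $\bsigma^*_i\in\reals^n$, draw $\bR\in\reals^{k\times n}$ with i.i.d.\ $\normal(0,1)$ entries, and set $\bu_i:=\bR\bsigma^*_i$ and $\bv_i:=(\id_k-\bsigma_i\bsigma_i^\sT)\bu_i$, which automatically lies in $\bsigma_i^\perp$. Writing $X_{ij}:=\<\bsigma_i,\bsigma_j\>$, the isotropy identities $\E[\bR^\sT\bR]=k\,\id_n$ and $\E[\<\bu_i,\bsigma_j\>\<\bu_l,\bsigma_m\>]=X^*_{il}X_{jm}$, combined with the expansion of $(\id-\bsigma_i\bsigma_i^\sT)(\id-\bsigma_j\bsigma_j^\sT)$, yield
\begin{align*}
\E[\<\bv_i,\bv_j\>]=X^*_{ij}\bigl(k-2+X_{ij}^2\bigr),\qquad \E[\|\bv_i\|_2^2]=k-1.
\end{align*}

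\emph{Closing the loop via Schur product.} Taking the expectation of the optimality inequality and using $\sum_{ij}A_{ij}X^*_{ij}=\SDP(\bA)$ gives, after rearrangement,
\begin{align*}
(k-1)\bigl(\SDP(\bA)-F_{\bA}(\bsigma)\bigr)\;\le\;\SDP(\bA)-\<\bA,\bX^*\odot\bX\odot\bX\>,
\end{align*}
where $\bX:=\bsigma\bsigma^\sT$ (with $\bsigma$ viewed as an $n\times k$ matrix). The Schur product theorem then applies: $\bX^*\odot\bX\odot\bX$ is PSD (Hadamard of PSDs) with unit diagonal, hence lies in $\PSD_1(n)$. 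For any $\bM\in\PSD_1(n)$ one has $|\<\bA,\bM\>|\le\|\bA\|_2\,\Tr(\bM)=n\|\bA\|_2$, so both terms on the right-hand side above are bounded by $n\|\bA\|_2$ in absolute value. This yields $\SDP(\bA)-F_{\bA}(\bsigma)\le 2n\|\bA\|_2/(k-1)$, which for $k\ge 2$ implies the advertised bound $8n\|\bA\|_2/\sqrt{k}$ (and is in fact sharper).

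\emph{Main obstacle.} The delicate step is engineering the random tangent field so that its covariances pick out the SDP objective $\<\bA,\bX^*\>$ up to a term that can be controlled deterministically. Gaussian transport through $\bR$ delivers the leading $kX^*_{ij}$ behaviour, but the two tangent-plane projections necessarily introduce a nuisance correction $X^*_{ij}X_{ij}^2$; what rescues the argument is the Schur product theorem, which recognises $\bX^*\odot\bX\odot\bX$ as itself a feasible SDP point, so that its inner product with $\bA$ is bounded by the same $n\|\bA\|_2$ that controls $\SDP(\bA)$.
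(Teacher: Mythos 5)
Your proof is correct, and it takes a genuinely different route from the paper's. The paper works with a single carefully chosen deterministic perturbation: it takes $\bx$ the eigenvector of $\bsigma^{\sT}\bsigma$ with smallest eigenvalue (so that $\bDelta=\bsigma\bx$ has $\|\bDelta\|_2\le\sqrt{n/k}$), builds a tangent field out of $\bx$ and a test vector $\bz$, truncates the coordinates where $|\Delta_i|$ is large, and then compares $F_{\bA}(\bsigma)$ against $\<\bv,\bA\bv\>$ for an arbitrary $\bv\in\sS(n,n)$, which is what produces the $1/\sqrt{k}$ rate and the constant $8$. You instead inject a Gaussian-randomized image of the SDP optimizer into the second-order condition and average: the identities $\E[\<\bv_i,\bv_j\>]=X^*_{ij}(k-2+X_{ij}^2)$ and $\E\|\bv_i\|_2^2=k-1$ check out (the inequality holds for every realization of $\bR$, so taking expectations is legitimate, and $\lambda_i$ is deterministic), the rearrangement to $(k-1)\bigl(\SDP(\bA)-F_{\bA}(\bsigma)\bigr)\le \SDP(\bA)-\<\bA,\bX^*\odot\bX\odot\bX\>$ is correct, and the Schur product theorem plus the nuclear/operator norm duality bounds the right side by $2n\|\bA\|_2$. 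Note what this buys: your bound $\SDP(\bA)-F_{\bA}(\bsigma)\le 2n\|\bA\|_2/(k-1)$ has a $1/k$ rate rather than $1/\sqrt{k}$, so it not only implies the stated theorem for $k\ge 2$ but actually answers, affirmatively, the question raised in Section \ref{sec:Related} of whether the $k$-dependence of Theorem \ref{thm:Main} can be improved to match the Grothendieck-type rate of \cite{montanari2016semidefinite} --- and it does so for \emph{all} local maxima, not just the global optimum $\OPT_k(\bA)$ (this randomized-tangent argument is essentially the one that later appeared in work of Mei, Misiakiewicz, Montanari and Oliveira on Grothendieck-type bounds for rank-constrained SDPs). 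The only step you compress is the derivation of the second-order condition with coordinate-dependent magnitudes $t_i$, but that is exactly the paper's first lemma, so there is no gap.
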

The proof of this theorem is provided in Section \ref{sec:Proofs}.

\begin{remark}
In typical applications $\|\bA\|_2 = \Theta(1)$, while $\SDP(\bA)=\Theta(n)$. In these cases
Theorem \ref{thm:Main} guarantees that an arbitrarily small relative error can be achieved by taking $k$ to be a 
large constant.

A simple example is the so-called $\integers_2$ synchronization problem \cite{OursReplicas}. In that case 
$\bA = (\lambda/n)\bxz\bxz^{\sT}+\bW$  with $\bxz\in\{+1,-1\}^n$, and $\bW$  a GOE radom matrix,
i.e. a symmetric matrix with independent entries $(W_{ii})_{1\le i\le n }$, $W_{ii}\sim \normal(0,2/n)$,
and $(W_{ij})_{1\le i<j\le n }$, $W_{ij}\sim \normal(0,1/n)$.
Basic random matrix theory implies $\|\bA\|_2\le 2+\lambda$ with high probability, while using $\bX = \bxz\bxz^{\sT}$
yields $\SDP(\bA)\ge n\lambda+o(n)$ (and indeed \cite{montanari2016semidefinite} 
proves $\SDP(\bA)\ge n\max(2,\lambda)+o(n)$).
\end{remark}

\begin{remark}
A naive application of Theorem \ref{thm:Main} to the sparse stochastic block model \cite{OursReplicas}
shows that an arbitrarily small error is achieved for $k = C\sqrt{\log n/\log\log n}$, with $C$ a large constant.
In fact this can be improved further to $k=O(1)$, by considering a suitably modified graph.
\end{remark}

\subsection{Further related work}
\label{sec:Related}

Burer and Monteiro \cite{burer2003nonlinear} introduced the the idea of constraining the rank and 
solving the PSD constraint thus obtaining a smooth non-convex problem. They also proved that, 
taking $k\ge \sqrt{2n}$, and under suitable conditions on $\bA$, the resulting non-convex problem 
has no local maxima, except for the global one. Their result actually extend to more general SDPs than 
Eq.~(\ref{eq:SDP.DEF}). 
While interesting, this result does not clarify the empirical finding that  $k=20$ is sufficient  
for some problems with $n$ as large as $10^5$ \cite{OursReplicas}.

Journ{\'e}e, Bach, Absil and Sepulchre \cite{journee2010low} proved sufficient conditions under 
which a local optimum of the rank-constrained problem is in fact a global optimum of the SDP.
In particular they proved that this happens if the local optimum $\bsigma\in\sS(n,k)$ is rank-deficient
(i.e. has rank at most $k-1$). It is however unclear \emph{a priori} when this happens,
and even computing the \emph{exact} rank of $\bsigma$ is very difficult (since $\bsigma$ is
typically produced by an ascent method). In the numerical
experiments of \cite{OursReplicas}, the local optimum was typically full rank.

Bandeira, Boumal and Voroninski  recently considered the extreme case $k=2$, in the specific
example of the $\integers_2$ synchronization problem \cite{bandeira2016low}. They proved that, if the 
signal is strong enough,
then this approach can effectively recover the underlying signal. Namely, all local minima are correlated with the signal. 

The SDP (\ref{eq:SDP.DEF}) was recently studied in the context of the so-called
community detection problem, namely for recovering vertex labels under the sparse stochastic block model.
Among a large number of interesting contributions, the closes to the present work are the ones concerning the 
so-called detection threshold  \cite{guedon2015community,montanari2016semidefinite}.
In particular, \cite{montanari2016semidefinite} proves a Grothendieck-type inequality for $\OPT_k(\bA)$. 
In slightly simplified form, this implies
\begin{align}
\OPT_k(\bA)\ge \SDP(\bA)- \frac{C}{k}\, n\|\bA\|_2\, ,
\end{align}
with $C$ an absolute constant. This can be immediately compared with Theorem 
\ref{thm:Main}, which of course implies $\OPT_k(\bA)\ge \SDP(\bA)- (9/\sqrt{k})\, n\|\bA\|_2$, 
This is similar to the result of \cite{montanari2016semidefinite}, but has a weaker dependence on $k$,
thus raising the interesting question of whether the 
dependence on $k$ in Theorem \ref{thm:Main} can be improved.

Grothendieck inequalities have found a broad range of applications
in computer science, see  \cite{khot2012grothendieck} and references therein.
However, they are typically used to approximate a combinatorial problem by solving a semidefinite program.
Theorem \ref{thm:Main} instead points at a different direction, namely solving the SDP with arbitrarily high accuracy 
by solving a non-convex rank-constrained problem. Of course this in turn provides an approximation of the 
original combinatorial problem.

Finally, there has been growing interest in non-convex methods for solving high-dimensional 
statistical estimation problems. Examples include matrix completion \cite{keshavan2010matrix},
phase retrieval \cite{chen2015solving}, regression 
with missing entries \cite{loh2011high}, and many others. These papers provide rigorous guarantees
under the assumption that the noise in the data is `small enough.' Under such conditions, a very good initialization 
can be constructed, e.g. by a spectral method, and it is sufficient to prove that the optimization
problem is well behaved in a neighborhood of the optimum.

The mechanism studied here  is dramatically different. Not only we do not require any `strong signal' condition,
but there is actually no signal at all (in other words, no `signal plus noise' structure is assumed).
We do not establish a property of a neighborhood of the correct solution, but instead a global property
of the cost function.

\subsection{Notations}

Throughout the paper, vectors  and matrices are denoted by boldface, e.g. $\bx,\by, \bz,\dots$.
We reserve the upper case, e.g. $\bA,\bB,\dots$, for $n\times n$ matrices.
Their coordinates are  denoted by non-bold font, e.g. $\bx = (x_1,\dots,x_n)$. We identify an element
$\bsigma = (\bsigma_1,\dots,\bsigma_n)\in\sS(n,k)$ with a matrix in $\reals^{n\times k}$.
The standard scalar product of $\bx,\by\in\reals^n$ is denoted by $\<\bx,\by\>=\sum_{i=1}^nx_iy_i$. 
Analogously, $\<\bA,\bB\> = \Tr(\bA\bB^{\sT})$ is the scalar product between matrices.

The eigenvalues of a symmetric matrix $\bM$ are denoted by $\xi_1(\bM)\ge\xi_2(\bM)\ge \cdots\ge \xi_n(\bM)=
\xi_{\min}(\bM)$. We let $\|\bx\|_2 = \sqrt{\<\bx,\bx\>}$ be the $\ell_2$ norm of vector $\bx$,
$\|\bM\|_2$ be the $\ell_2$ operator norm of matrix $\bM$, and $\|\bM\|_F$ its Frobenius norm.

\section{Proofs}
\label{sec:Proofs}

\subsection{Preliminary lemmas}

Our first lemma collects the classical second order conditions for problem (\ref{eq:OptK}).
While this is standard, we provide a self-contained proof for the reader's convenience.
\begin{lemma}
Let $\bsigma\in\sS(n,k)$ be a local maximum of $F_{\bA}$ on $\sS(n,k)$. Then there exists 
a diagonal matrix $\bLambda$ of Lagrange multipliers such that the following conditions hold:
\begin{enumerate}
\item \emph{First order stationarity:}
\begin{align}
(\bLambda-\bA)\bsigma = 0\,. \label{eq:FirstOrder}
\end{align}
\item \emph{Second-order stationarity:} for all $\bu = (\bu_1,\dots,\bu_n)$, $\bu_i\in\reals^k$, 
such that $\<\bsigma_i,\bu_i\>=0$, we have
\begin{align}
\sum_{i,j=1}^n(\bLambda-\bA)_{ij}\<\bu_i,\bu_j\>\ge 0\, .\label{eq:SecondOrder}
\end{align}
\end{enumerate}
\end{lemma}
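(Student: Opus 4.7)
The plan is to treat this as a standard constrained optimization problem on the product manifold $\sS(n,k)$, viewed extrinsically as the zero locus in $\reals^{n \times k}$ of the $n$ smooth constraints $g_i(\bsigma) \equiv \|\bsigma_i\|_2^2 - 1 = 0$. Since the gradients $\nabla g_i$ at any $\bsigma \in \sS(n,k)$ are clearly linearly independent (each involves a distinct block of coordinates), the Lagrange multiplier setup applies unconditionally. I introduce one multiplier $\Lambda_{ii}$ per constraint and assemble them as the diagonal matrix $\bLambda$.

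For the first-order condition, I would compute $\partial F_{\bA}/\partial \bsigma_i$ directly from the definition. Using the symmetry $A_{ij} = A_{ji}$, both the $\<\bsigma_i,\bsigma_l\>$ and $\<\bsigma_j,\bsigma_i\>$ contributions give $2\sum_j A_{ij}\bsigma_j = 2(\bA\bsigma)_i$. The Lagrange condition $\nabla_{\bsigma_i} F_{\bA} = 2\Lambda_{ii}\bsigma_i$ then reads $(\bA\bsigma)_i = \Lambda_{ii}\bsigma_i$ for each $i$, which is exactly $(\bLambda - \bA)\bsigma = 0$.

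For the second-order condition, I would pick an admissible variation $\bu$ with $\<\bu_i,\bsigma_i\> = 0$ and any smooth curve $t \mapsto \gamma(t) \in \sS(n,k)$ with $\gamma(0)=\bsigma$, $\gamma'(0)=\bu$. Differentiating $\|\gamma_i(t)\|_2^2 \equiv 1$ twice at $t=0$ yields the key identity $\<\gamma_i''(0),\bsigma_i\> = -\|\bu_i\|_2^2$, which encodes the curvature of each sphere. On the other hand, two differentiations of $F_{\bA}(\gamma(t))$ give
\begin{align}
\frac{d^2}{dt^2}F_{\bA}(\gamma(t))\Big|_{t=0} = 2\sum_{i,j}A_{ij}\<\gamma_i''(0),\bsigma_j\> + 2\sum_{i,j}A_{ij}\<\bu_i,\bu_j\>\, .
\end{align}
Applying the first-order condition $\sum_j A_{ij}\bsigma_j = \Lambda_{ii}\bsigma_i$ collapses the first sum into $-\sum_i \Lambda_{ii}\|\bu_i\|_2^2$. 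Since $\bsigma$ is a local maximum, this second derivative must be $\le 0$, and rearranging (using that $\bLambda$ is diagonal so $\sum_i \Lambda_{ii}\|\bu_i\|_2^2 = \sum_{i,j}\Lambda_{ij}\<\bu_i,\bu_j\>$) gives precisely \eqref{eq:SecondOrder}.

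There is no real obstacle here; the only subtlety is making sure to exploit the first-order condition to eliminate the unknown $\gamma_i''(0)$ rather than worrying about the entire acceleration vector. Everything else is a direct differentiation plus a rearrangement.
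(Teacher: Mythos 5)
Your proposal is correct and follows essentially the same route as the paper: the paper simply instantiates your arbitrary curve with the explicit choice $\bsigma_i(t)=\bsigma_i\cos(\|\bu_i\|_2 t)+\bhu_i\sin(\|\bu_i\|_2 t)$, whose acceleration at $t=0$ is exactly $-\|\bu_i\|_2^2\,\bsigma_i$, and extracts the multipliers from first-order stationarity along these curves instead of invoking the classical Lagrange multiplier theorem under linear independence of the constraint gradients. Your treatment of the curvature term---using the constraint to pin down $\<\gamma_i''(0),\bsigma_i\>$ and the first-order condition to discard the remaining, curve-dependent part of the acceleration---is exactly the mechanism behind the paper's Taylor-expansion derivation of Eq.~(\ref{eq:SecondOrder}).
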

\begin{proof}
Fix $\bu$ as in point 2 above an let $\bhu_i = \bu_i/\|\bu_i\|_2$ (with the convention $\bhu_i=0$ 
if $\bu_i = 0$). Define $\bsigma(t)$ for $t\in \reals$, by
\begin{align}
\bsigma_i(t) \equiv \bsigma_i\, \cos\big(\|\bu_i\|_2\, t\big)+ \bhu_i\, \sin\big(\|\bu_i\|_2\, t\big)\, .
\end{align}
Note that $\bsigma(t)\in \sS(n,k)$ for all $t\in\reals$, and $\bsigma(0)=\bsigma$. Also $t\mapsto \bsigma(t)$ 
is smooth.
Since $\bsigma$ is a local maximum of $F_{\bA}$, it follows that $t=0$ must be a local maximum of
$t\mapsto F_{\bA}(\bsigma(t))$. 

By Taylor expansion
\begin{align}
\bsigma_i(t)  = \bsigma_i + \bu_i\, t -\frac{1}{2} \|\bu_i\|_2^2\bsigma_i \, t^2+O(t^3)\, ,
\end{align}
and
\begin{align}
F_{\bA}(\bsigma(t)) &= F_{\bA}(\bsigma(0))+ 2\sum_{i=1}^n \<\bu_i,\bg_i\>\, t-
\sum_{i=1}^n \|\bu_i\|_2^2\<\bsigma_i,\bg_i\>\, t^2 +
\sum_{i,j=1}^nA_{ij}\<\bu_i,\bu_j\>\, t^2+ O(t^3)\, \label{eq:Taylor}\\
\bg_i &\equiv \sum_{j=1}^nA_{ij}\bsigma_j\, .
\end{align}
By first order stationarity of $t\mapsto F_{\bA}(\bsigma(t))$, we must have $\<\bu_i,\bg_i\>=0$
for all $\bu_i$ orthogonal to $\bsigma_i$. Therefore $\bg_i = \lambda_i\, \bsigma_i$ for some 
$\lambda_i\in\reals$. This yields Eq.~(\ref{eq:FirstOrder}) with $\bLambda$ the diagonal matrix with 
$\Lambda_{i,i}=\lambda_i$. 

Substituting in Eq.~(\ref{eq:Taylor}), we get
\begin{align}
F_{\bA}(\bsigma(t))&= F_{\bA}(\bsigma(0)) +
\sum_{i,j=1}^n(\bA-\bLambda)_{ij}\<\bu_i,\bu_j\>\, t^2+ O(t^3)\, ,
\end{align}
which immediately implies the claim (\ref{eq:SecondOrder}), since $t\mapsto F_{\bA}(\bsigma(t))$ is a local maximum.
\end{proof}

Throughout, for  $\Lambda$ the matrix of Lagrange multipliers, we 
will denote by $\lambda_i=\Lambda_{i,i}$ its diagonal entries. The next lemma collects a few simple facts
about local maxima.
\begin{lemma}\label{lemma:Simple}
Let $\bsigma\in\sS(n,k)$ be a local maximum of $F_{\bA}$ on $\sS(n,k)$.
Then we have the following
\begin{enumerate}
\item The associated matrix of multipliers $\bLambda$ is uniquely determined by 
\begin{align}
\lambda_i = \Big\|\sum_{j=1}^nA_{ij}\bsigma_j\Big\|_2\, .
\end{align}
\item $F_{\bA}(\bsigma) =\Tr(\bLambda)$.
\item For every $S\subseteq [n]$, $|S|\le k-1$, we have $(\bLambda-\bA)_{S,S}\succeq 0$.
In particular, $\lambda_i\ge 0$.
\item $\|\bLambda\|_F^2 = \sum_{i=1}^n\lambda_i^2\le n\|\bA\|_2^2$.
\item We have
\begin{align}
\xi_{\min}(\bsigma^{\sT}\bsigma)\le \frac{n}{k}\, .
\end{align}
\end{enumerate}
\end{lemma}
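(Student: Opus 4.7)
The plan is to treat the five assertions in the order 3 $\to$ 1 $\to$ 2 $\to$ 4 $\to$ 5, since the only real work is in Claim 3 (the clean PSD statement coming from the second-order condition); the remaining four are short consequences of Claims 1, 3 and elementary linear algebra.

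For Claim 3 the observation is geometric: when $|S|\le k-1$, the $|S|$ points $\{\bsigma_i\}_{i\in S}$ span a proper subspace of $\reals^k$, so there exists a unit vector $\bfe\in\reals^k$ orthogonal to all of them. Given any $\bv=(v_i)_{i\in S}\in\reals^{|S|}$, I set $\bu_i=v_i\bfe$ for $i\in S$ and $\bu_i=\bzero$ otherwise. The tangency constraint $\<\bu_i,\bsigma_i\>=0$ then holds for every $i$, so the second-order inequality \eqref{eq:SecondOrder} applies, and its left-hand side collapses to $\bv^{\sT}(\bLambda-\bA)_{S,S}\bv$. Thus $(\bLambda-\bA)_{S,S}\succeq 0$. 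Taking $S=\{i\}$ in particular yields $\lambda_i\ge A_{ii}$, which combined with Claim 1 gives $\lambda_i\ge 0$.

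For Claim 1 the $i$-th row of \eqref{eq:FirstOrder} reads $\lambda_i\bsigma_i=\bg_i:=\sum_j A_{ij}\bsigma_j$, so $\bg_i\parallel\bsigma_i$ and $|\lambda_i|=\|\bg_i\|_2$. The only delicate point is fixing the sign; the cleanest route (for which I would be willing to absorb $\Tr(\bA)$ into an additive constant that does not affect local maxima, so that effectively $A_{ii}=0$) is to invoke the diagonal of Claim 3, giving $\lambda_i\ge 0$ and hence $\lambda_i=+\|\bg_i\|_2$. Claim 2 then follows by multiplying \eqref{eq:FirstOrder} on the left by $\bsigma^{\sT}$ and taking the trace: $\Tr(\bsigma^{\sT}\bA\bsigma)=\<\bA,\bsigma\bsigma^{\sT}\>=F_{\bA}(\bsigma)$, while $\Tr(\bsigma^{\sT}\bLambda\bsigma)=\sum_i\lambda_i\|\bsigma_i\|_2^2=\Tr(\bLambda)$.

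Claims 4 and 5 are then immediate. For Claim 4, Claim 1 gives $\sum_i\lambda_i^2=\sum_i\|\bg_i\|_2^2=\|\bA\bsigma\|_F^2$, and the operator-norm bound $\|\bA\bsigma\|_F^2\le\|\bA\|_2^2\|\bsigma\|_F^2$ combined with $\|\bsigma\|_F^2=\sum_i\|\bsigma_i\|_2^2=n$ yields the stated inequality. For Claim 5, $\bsigma^{\sT}\bsigma\in\reals^{k\times k}$ is PSD with $\Tr(\bsigma^{\sT}\bsigma)=\sum_i\|\bsigma_i\|_2^2=n$, so its $k$ eigenvalues sum to $n$ and the smallest is at most $n/k$. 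The main obstacle in the whole argument is pinning down the sign of $\lambda_i$ in Claim 1; every other step is a one-line manipulation once the common-$\bfe$ construction used for Claim 3 has been identified.
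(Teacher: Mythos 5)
Your proposal follows the paper's proof essentially step for step: the same common orthogonal direction $\bfe$ fed into the second-order condition \eqref{eq:SecondOrder} for point 3, the norm of the first-order identity $\lambda_i\bsigma_i=\sum_j A_{ij}\bsigma_j$ for point 1, contraction with $\bsigma$ for point 2, the bound $\sum_i\lambda_i^2=\|\bA\bsigma\|_F^2\le\|\bA\|_2^2\|\bsigma\|_F^2=n\|\bA\|_2^2$ for point 4 (the paper phrases the same estimate via the nuclear norm of $\bsigma\bsigma^{\sT}$), and the trace argument for point 5.

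The one place you deviate is the sign of $\lambda_i$, and there you are in fact more scrupulous than the paper: taking $S=\{i\}$ in point 3 gives only $\lambda_i\ge A_{ii}$, whereas the paper simply asserts ``in particular, $\lambda_i\ge 0$''. Two caveats on your handling of this. First, the sentence deducing $\lambda_i\ge 0$ from ``Claim 1'' is circular as written, since your proof of Claim 1 fixes the sign precisely by appealing to Claim 3; drop it and keep only the normalization route. Second, absorbing the diagonal into an additive constant is not quite a free reduction for this particular lemma: replacing $\bA$ by $\bA-{\rm diag}(A_{11},\dots,A_{nn})$ leaves the set of local maxima unchanged, but it shifts each multiplier to $\lambda_i-A_{ii}$ and changes $\sum_j A_{ij}\bsigma_j$ by $-A_{ii}\bsigma_i$, so what you actually prove is the statement for the zero-diagonal matrix rather than verbatim for $\bA$. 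Some hypothesis or normalization on the diagonal is genuinely needed: for $\bA=-\id$ (the negative identity) every $\bsigma$ is a local maximum with $\lambda_i=-1<0$, so points 1 and the claim $\lambda_i\ge 0$ fail as literally stated --- a gap the paper itself shares and does not acknowledge. With the diagonal normalization stated explicitly, your proof is correct and matches the paper's.
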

\begin{proof}
For point 1, note that, by Eq.~(\ref{eq:FirstOrder}), $\lambda_i \bsigma_i = \sum_{j=1}^nA_{ij}\bsigma_j$.
The claim follows by taking the norm of both sides since $\|\bsigma_i\|=1$ (and noting that $\lambda_i\ge 0$
as proved in point 3).

Point 2 follows again from $\lambda_i \bsigma_i = \sum_{j=1}^nA_{ij}\bsigma_j$, multiplying both sides by $\bsigma_i$
and summing over $i\in\{1,\dots, n\}$. 

For point $3$ assume, without loss of generality, that $S= \{1,2,\dots,k-1\}$. 
Let $\bu\in\reals^k$, $\|\bu\|_2=1$ be orthogonal to $\bsigma_1,\dots,\bsigma_{k-1}$. For any $\bx\in \reals^{k-1}$, define
$\bu_i = x_i\, \bu$ for $i\le k-1$, and $\bu_i =0$ for $i\ge k$. Then, the second order stationarity condition 
 (\ref{eq:SecondOrder}) implies 
\begin{align}
0\le \sum_{i,j=1}^n(\bLambda-\bA)_{ij}\<\bu_i,\bu_j\> = \<\bx,(\bLambda-\bA)_{SS}\bx\>\, ,
\end{align}
which proves the claim.

For point 4, note that, since $\bLambda\bsigma =\bA\bsigma$ we also have
\begin{align}
\sum_{i=1}^n\lambda_i^2 &= \<\bsigma,\bLambda^2\bsigma\> =  \<\bLambda\bsigma,\bLambda\bsigma\> \\
& = \<\bA\bsigma,\bA\bsigma\> = \Tr(\bA^2\bsigma\bsigma^{\sT}) \\
& \le \|\bA^2\|_2 \|\bsigma\bsigma^{\sT}\|_*\, ,
\end{align}
where $\|\bsigma\bsigma^{\sT}\|_*$ denotes the nuclear norm (sum of absolute values of singular values)
of matrix $\bsigma\bsigma^{\sT}$. The claim follows since 
\begin{align}
\|\bsigma\bsigma^{\sT}\|_* = \|\bsigma\|_F^2 =\sum_{i=1}^n\|\bsigma_i\|_2^2 = n\, .
\end{align}
(The first equality holds because the singular values of  $\bsigma\bsigma^{\sT}$ are obtained by squaring the singular values of $\bsigma$.)

For the last point let
\begin{align}
\bSigma \equiv \frac{1}{n}\bsigma^{\sT}\bsigma=\frac{1}{n}\sum_{i=1}^n\bsigma_i\bsigma^{\sT}_i \, .
\end{align}
Then $\bSigma\succeq 0$ and $\Tr(\bSigma) = \sum_{i=1}^n\|\bsigma_i\|_2^2/n = 1$. Hence
$\sum_{i=1}^k\xi_i(\bSigma) = 1$, and since $\xi_i(\bSigma)\ge 0$, we necessarily have $\xi_{\min}(\bSigma)\le 1/k$.
\end{proof}

\subsection{Proof of Theorem \ref{thm:Main}}

Let $\bsigma\in\sS(n,k)$ be a local maximum of $F_{\bA}$ on $\sS(n,k)$, and $\bLambda$
be the associated multipliers. By Lemma \ref{lemma:Simple} (point 5), we can fix $\bx\in\reals^k$
$\|\bx\|_2=1$ such that
\begin{align}
\bDelta \equiv \bsigma\bx\in\reals^{n}\, ,\;\;\;\;\;\; \|\bDelta\|_2 \le \sqrt{\frac{n}{k}}\, .\label{eq:DeltaBound}
\end{align}
Let $S\subseteq [n]$, be the set of indices $S\equiv\{i\in [n]:\; |\Delta_i|\ge 1/\sqrt{2}\}$. 
By Markov inequality $|S|\le 2n/k$. 

For a vector $\bz\in\reals^n$, let $\tbz$ be the vector obtained by zero-ing the entries in $S$.
Define $\bu = (\bu_1,\dots, \bu_n)$,  $\bu_i\in\reals^k$, by
\begin{align}
\bu_i = \tz_i\,\bx +\tbu_i\;, \;\;\;\; \tbu_i \equiv -\frac{\tz_i\Delta_i}{1-\Delta_i^2}\, \projp_{\bx}\bsigma_i\, ,
\end{align}
where $\projp_{\bx} = \id-\bx\bx^{\sT}$ is the projector orthogonal to $\bx$.
Here, it is understood that $\tbu_i = 0$ when $|\Delta_i|=1$.

Note that $\Delta_i = \<\bsigma_i,\bx\>\in [-1,1]$. Hence $\|\projp_{\bx}\bsigma_i\|_2^2 = \<\bsigma_i,\projp_{\bx}\bsigma_i\>= 1-\Delta_i^2$. We therefore have
\begin{align}
\<\bsigma_i,\bu_i\> = \tz_i\, \Delta_i - \frac{\tz_i\Delta_i}{1-\Delta_i^2}\<\bsigma_i,\projp_{\bx}\bsigma_i\>=0\, .
\end{align}
We can therefore apply the second-order stationarity condition (\ref{eq:SecondOrder}), to get
(with the notation $\bv_R = (\bv_i:\; i\in R)$, for $R\subseteq[n]$):
\begin{align}
0&\le \sum_{i,j=1}^n(\bLambda-\bA)_{i,j}\<\bu_i,\bu_j\>\\
& = \sum_{i,j=1}^n(\bLambda-\bA)_{i,j} \tz_i\tz_j+\sum_{i,j=1}^n(\bLambda-\bA)_{i,j} \<\tbu_i,\tbu_j\>\\
&\le \<\tbz,(\bLambda-\bA)\tbz\>+\sum_{i=1}^n \lambda_i\|\tbu_i\|_2^2+
\|\bA\|_2\|\tbu\tbu\|_*\\
&\le \<\tbz,(\bLambda-\bA)\tbz\>+\sum_{i=1}^n \lambda_i\|\tbu_i\|_2^2+
\|\bA\|_2\|\tbu\|_F^2\, ,\label{eq:Continuing}
\end{align}
where in the last inequality $\|\bM\|_*$ denotes the nuclear norm of matrix $\bM$ (sum of absolute values of singular 
values), and we used the fact that $\|\bM\bM^{\sT}\|_*= \|\bM\|_F^2$.
Now 
\begin{align}
\|\tbu\|_F^2 &= \sum_{i=1}^n\|\tbu_i\|_2^2 \\
& =\sum_{i=1}^n\frac{\tz_i^2\Delta_i^2}{1-\Delta_i^2}\\
& \le 2 \sum_{i=1}^n\tz_i^2\Delta_i^2\, .
\end{align}
Therefore, continuing from Eq.~(\ref{eq:Continuing}), ad using a similar bound for the first sum,
we get the inequality
\begin{align}
0\le \<\bz_{S^c},(\bLambda-\bA)_{S^c,S^c}\bz_{S^c}\>+2\sum_{i\in S^c} \lambda_iz_i^2\Delta_i^2 +2 
\|\bA\|_2\sum_{i\in S^c} z_i^2\Delta_i^2\, ,
\end{align}
where we made explicit the dependence on $S$.

Now consider $\bv\in\sS(n,n)$, i.e. $\bv = (\bv_1,\bv_2,\dots,\bv_n)$, $\bv_i\in\sS^{n-1}$.
Applying the last inequality to the $a$-th column of $\bv$, denoted by $\bv^a$, we get
\begin{align}
0\le \<\bv^a_{S^c},(\bLambda-\bA)_{S^c,S^c}\bv^a_{S^c}\>+2\sum_{i\in S^c} \lambda_i(v^a_i)^2\Delta_i^2 +2 
\|\bA\|_2\sum_{i\in S^c} (v_i^a)^2\Delta_i^2\, .
\end{align}
Summing over $a\in\{1,\dots,n\}$ and using $\sum_{a=1}^n(v^a_i)^2=1$, we get
\begin{align}
0\le \<\bv_{S^c},(\bLambda-\bA)_{S^c,S^c}\bv_{S^c}\>+2\sum_{i\in S^c} \lambda_i\Delta_i^2 +2 
\|\bA\|_2\sum_{i\in S^c} \Delta_i^2\, .
\end{align}
Using Lemma \ref{lemma:Simple}, points 2 and 3 (in particular, $\lambda_i\ge 0$), we get
\begin{align}
\<\bv_{S^c},\bA_{S^c,S^c}\bv_{S^c}\> &\le F_{\bA}(\bsigma) + 2\sum_{i\in S^c} \lambda_i\Delta_i^2 +2 
\|\bA\|_2\sum_{i\in S^c} \Delta_i^2\\
&\le F_{\bA}(\bsigma) + \sqrt{2}\sum_{i\in S^c} \lambda_i|\Delta_i| +2 
\|\bA\|_2\|\bDelta\|_2^2\\
&\le F_{\bA}(\bsigma) + \sqrt{2}\|\bLambda\|_F\|\bDelta\|_2 +2 
\|\bA\|_2\|\bDelta\|_2^2\, ,
\end{align}
where the second inequality follows since $|\Delta_i|\le 1/\sqrt{2}$ for $i\in S$, and the last one by Cauchy-Schwartz.
Finally using Lemma \ref{lemma:Simple}, point 4, together with  Eq.~(\ref{eq:DeltaBound}), we get
\begin{align}
\<\bv_{S^c},\bA_{S^c,S^c}\bv_{S^c}\> &\le F_{\bA}(\bsigma) +\sqrt{\frac{8}{k}}\; n\|\bA\|_2\, .
\label{eq:Difficult}
\end{align}
Next notice that 
\begin{align}
\<\bv_S,\bA\bv_{S^c}\> &\le \|\bA_{S,S^c}\|_2\|\bv_S\bv_{S^c}^{\sT}\|_*\\
& \le  \|\bA\|_2\|\bv_S\|_F\|\bv_S^c\|_F = \|\bA\|_2\sqrt{|S|(n-|S|)}\\
& \le \sqrt{\frac{2}{k}}\; n\|\bA\|_2\, . \label{eq:Easy1}
\end{align}
Proceeding analogously, we get
\begin{align}
\<\bv_S,\bA\bv_{S}\> &\le \frac{2}{k}\; n\|\bA\|_2\, . \label{eq:Easy2}
\end{align}
Finally, combining the bounds (\ref{eq:Difficult}), (\ref{eq:Easy1}), (\ref{eq:Easy2})we get
\begin{align}
\<\bv,\bA\bv\> &\le F_{\bA}(\bsigma) +\frac{5\sqrt{2}}{\sqrt{k}}\; n\|\bA\|_2\, .
\end{align}
Since this holds for any $\bv\in\sS(n,n)$, it implies the second inequality in Eq.~(\ref{eq:Main}),
because $5\sqrt{2}<8$. The first inequality in Eq.~(\ref{eq:Main}) is trivial.

\subsection*{Acknowledgments}

This work was partially supported by NSF grants CCF-1319979 and DMS-1106627 and the
AFOSR grant FA9550-13-1-0036.

\bibliographystyle{amsalpha}

\begin{thebibliography}{JMRT16}

\bibitem[BBV16]{bandeira2016low}
Afonso~S. Bandeira, Nicolas Boumal, and Vladislav Voroninski, \emph{On the
  low-rank approach for semidefinite programs arising in synchronization and
  community detection}, {\sf arXiv:1602.04426} (2016).

\bibitem[BM03]{burer2003nonlinear}
Samuel Burer and Renato~D.C. Monteiro, \emph{A nonlinear programming algorithm
  for solving semidefinite programs via low-rank factorization}, Mathematical
  Programming \textbf{95} (2003), no.~2, 329--357.

\bibitem[CC15]{chen2015solving}
Yuxin Chen and Emmanuel Candes, \emph{Solving random quadratic systems of
  equations is nearly as easy as solving linear systems}, Advances in Neural
  Information Processing Systems, 2015, pp.~739--747.

\bibitem[GV15]{guedon2015community}
Olivier Gu{\'e}don and Roman Vershynin, \emph{Community detection in sparse
  networks via grothendieck’s inequality}, Probability Theory and Related
  Fields (2015), 1--25.

\bibitem[GW95]{goemans1995improved}
Michel~X. Goemans and David~P. Williamson, \emph{Improved approximation
  algorithms for maximum cut and satisfiability problems using semidefinite
  programming}, Journal of the ACM (JACM) \textbf{42} (1995), no.~6,
  1115--1145.

\bibitem[JBAS10]{journee2010low}
Michel Journ{\'e}e, Francis Bach, P-A Absil, and Rodolphe Sepulchre,
  \emph{Low-rank optimization on the cone of positive semidefinite matrices},
  SIAM Journal on Optimization \textbf{20} (2010), no.~5, 2327--2351.

\bibitem[JMRT16]{OursReplicas}
Adel Javanmard, Andrea Montanari, and Federico Ricci-Tersenghi, \emph{Phase
  transitions in semidefinite relaxations}, Proceedings of the National Academy
  of Sciences (2016), In press.

\bibitem[KMO10]{keshavan2010matrix}
Raghunandan~H Keshavan, Andrea Montanari, and Sewoong Oh, \emph{Matrix
  completion from noisy entries}, Journal of Machine Learning Research
  \textbf{11} (2010), 2057--2078.

\bibitem[KN12]{khot2012grothendieck}
Subhash Khot and Assaf Naor, \emph{Grothendieck-type inequalities in
  combinatorial optimization}, Communications on Pure and Applied Mathematics
  \textbf{65} (2012), no.~7, 992--1035.

\bibitem[LW11]{loh2011high}
Po-Ling Loh and Martin~J Wainwright, \emph{High-dimensional regression with
  noisy and missing data: Provable guarantees with non-convexity}, Advances in
  Neural Information Processing Systems, 2011, pp.~2726--2734.

\bibitem[MS16]{montanari2016semidefinite}
Andrea Montanari and Subhabrata Sen, \emph{Semidefinite programs on sparse
  random graphs and their application to community detection}, Proceedings of
  the 48th Annual ACM Symposium on Theory of Computing, ACM, 2016.

\bibitem[Nes98]{nesterov1998semidefinite}
Yuri Nesterov, \emph{Semidefinite relaxation and nonconvex quadratic
  optimization}, Optimization methods and software \textbf{9} (1998), no.~1-3,
  141--160.


\end{thebibliography}

\providecommand{\bysame}{\leavevmode\hbox to3em{\hrulefill}\thinspace}
\providecommand{\MR}{\relax\ifhmode\unskip\space\fi MR }
\providecommand{\MRhref}[2]{%
  \href{http://www.ams.org/mathscinet-getitem?mr=#1}{#2}
}
\providecommand{\href}[2]{#2}

\end{document}